\documentclass[11pt]{article}

\usepackage[a4paper,margin=28mm]{geometry}
\usepackage{amsmath,amssymb,amsthm,mathtools}
\usepackage{booktabs}
\usepackage{array}
\usepackage{microtype}
\usepackage[hidelinks]{hyperref}
\usepackage[nameinlink,noabbrev]{cleveref}

\newtheorem{theorem}{Theorem}[section]
\newtheorem{lemma}[theorem]{Lemma}
\newtheorem{proposition}[theorem]{Proposition}
\newtheorem{corollary}[theorem]{Corollary}

\theoremstyle{definition}

\newtheorem{remark}[theorem]{Remark}

\title{Chromatic Extremal Thresholds and the Multipartite $K_4$-Free Problem}
\author{Yuuki Kasugai\\{\small Japan}}
\date{}

\begin{document}
\maketitle

\begin{abstract}
For positive integers $n,r,t$, let $\delta(n,r,t)$ denote the maximum possible minimum degree of a balanced $r$-partite graph with parts of size $n$ and chromatic number at most $t$. Lo, Treglown and Zhao~\cite{LTZ} established a general upper bound for this parameter and used it, together with explicit constructions, to determine the corresponding multipartite clique threshold up to an additive constant in a broad parameter range. I determine the chromatic parameter throughout the range
\[
r=mt-a,\qquad
m\ge2,\quad t\ge3,\quad
2\le a\le \min\{m,t-1\}.
\]
The answer differs from the Lo--Treglown--Zhao upper bound by at most one. I give an explicit arithmetic criterion deciding when this one-unit correction occurs. The proof reduces the problem to an integer matrix extremum. In the boundary case, equality forces the supports of all mixed rows to form a spanning star, after which the only remaining obstruction is a divisibility condition. Combining this formula with the Andrásfai--Erdős--Sós theorem sharpens the known equality range for
$f(n,r,t+1)=\delta(n,r,t)$. In particular, for $t=3$ it removes the remaining size restrictions at $r=10$ and $r=13$. Together with the $r=7$ result in~\cite{Kasugai}, the classical $r=4$ case, and the known congruence classes, this gives a formula for the multipartite $K_4$-free problem for every admissible $r\ge4$ and every $n\ge1$.
\end{abstract}

\section{Introduction}

Let $f(n,r,t+1)$ denote the largest possible minimum degree of a balanced
$r$-partite graph with parts of size $n$ containing no copy of $K_{t+1}$.
Lo, Treglown and Zhao~\cite{LTZ} introduced the related parameter
\[
\delta(n,r,t)
=
\max\{\delta(G):G\text{ is balanced $r$-partite with parts of size $n$ and }
\chi(G)\le t\}.
\]
For $(m-1)t<r<mt$, they proved~\cite[Proposition~5.1]{LTZ}
\begin{equation}\label{eq:LTZ-upper}
\delta(n,r,t)
\le
(r-1)n-
\left\lceil
\frac{(m-1)(r-1)n}{mt-2}
\right\rceil .
\end{equation}
They also constructed graphs giving a lower bound whose integer rounding may differ from
\eqref{eq:LTZ-upper} by an additive constant.

In earlier work, I determined the remaining case $f(n,7,4)$~\cite{Kasugai}.
A central step in that argument was the evaluation of $\delta(n,7,3)$ through an integer
matrix formulation.  The present paper studies the integer extremal structure behind that
calculation in a general parameter range; the $r=7$, $t=3$ matrix extremum appears as a
special case of the formula below.

This paper has two main results.  First, for
\[
r=mt-a,\qquad
m\ge2,\quad t\ge3,\quad 2\le a\le\min\{m,t-1\},
\]
I determine $\delta(n,r,t)$ for every $n$.  If
\[
d=\left\lceil\frac{(m-1)(r-1)n}{mt-2}\right\rceil,
\qquad d=(m-1)s+T,
\quad s,T\in\mathbb{Z},\quad 0\le T\le m-2,
\]
and
\[
H=n+td+(t-2)s-rn,
\]
then the correction to the Lo--Treglown--Zhao bound is always either zero or one, and is
specified by the sign of $H$ and the divisibility condition $(n-s)\mid T$.

Second, the specialization $t=3$ determines the corresponding multipartite $K_4$-free
threshold.  The $r=4$ case and the congruence classes $r\equiv0,-1\pmod3$ were already known
\cite{LTZ}, while the case $r=7$ was established in~\cite{Kasugai}.  The value of
$\delta(n,r,3)$ obtained here removes the remaining size restrictions at $r=10$ and $r=13$
and, together with the known equality $f(n,r,4)=\delta(n,r,3)$ for $r\ge16$ with
$r\equiv1\pmod3$~\cite{LTZ}, yields the value of $f(n,r,4)$ for every $r\ge4$ and
every $n\ge1$.  Thus the $t=3$ case of the multipartite clique problem posed in 1975 is
determined.

\section{The integer matrix formulation}

Write $[r]:=\{1,\dots,r\}$.  A row of a matrix will be called \emph{mixed} if it has at
least two positive entries.

For an $r\times t$ nonnegative integer matrix
\[
A=(a_{ij}),
\qquad
\sum_{j=1}^t a_{ij}=n
\quad (i\in[r]),
\]
write
\[
c_j:=\sum_{i=1}^r a_{ij}
\]
for its $j$th column sum and define
\[
D(A):=
\max_{\substack{i\in[r],\,j\in[t]\\ a_{ij}>0}}
(c_j-a_{ij}).
\]

\begin{lemma}[Matrix formulation]\label{lem:matrix}
For all $n,r,t$,
\[
\delta(n,r,t)
=
(r-1)n-\min_A D(A),
\]
where the minimum is over all nonnegative integer $r\times t$ matrices whose row sums are $n$.
\end{lemma}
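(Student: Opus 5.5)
I would prove the two inequalities separately. The matrix $A$ records how a proper $t$-colouring distributes each part: $a_{ij}$ is the number of vertices of part $V_i$ receiving colour $j$.

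\emph{Lower bound on $\delta$ (construction).} Given any nonnegative integer matrix $A$ with row sums $n$, I split each part $V_i$ into sets $V_{ij}$ with $|V_{ij}|=a_{ij}$. I then join $u\in V_{ij}$ and $v\in V_{i'j'}$ exactly when $i\ne i'$ and $j\ne j'$. This graph is balanced $r$-partite with parts of size $n$. The sets $C_j=\bigcup_i V_{ij}$ are independent, so $\chi\le t$.

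A vertex in $V_{ij}$ has no neighbours inside $V_i$. Outside $V_i$, it misses exactly the vertices of colour $j$, of which there are $c_j-a_{ij}$. Hence its degree is $(r-1)n-(c_j-a_{ij})$. Only vertices that actually exist matter, i.e.\ those with $a_{ij}>0$, so the minimum degree is $(r-1)n-D(A)$. Taking the best $A$ gives $\delta(n,r,t)\ge (r-1)n-\min_A D(A)$.

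\emph{Upper bound on $\delta$.} Let $G$ be balanced $r$-partite with $\chi(G)\le t$, and fix a proper colouring with classes $C_1,\dots,C_t$ (some possibly empty). Set $a_{ij}=|V_i\cap C_j|$; the row sums are $n$. Whenever $a_{ij}>0$, pick $v\in V_i\cap C_j$. The vertex $v$ is nonadjacent to all of $V_i$, since the parts are independent in an $r$-partite graph. It is also nonadjacent to all of $C_j\setminus V_i$, since $C_j$ is independent. These two sets are disjoint, so
\[
\deg v\le rn-1-(n-1)-(c_j-a_{ij})=(r-1)n-(c_j-a_{ij}).
\]
Maximising over the admissible pairs $(i,j)$ gives $\delta(G)\le (r-1)n-D(A)\le (r-1)n-\min_A D(A)$.

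Combining the two bounds gives the equality. There is no real obstacle. The only point needing care is that $D(A)$ ranges only over entries with $a_{ij}>0$, which matches the fact that empty cells $V_{ij}$ contribute no vertices. The minimum exists because the set of such matrices is finite and nonempty.
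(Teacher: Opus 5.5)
Your proof is correct and follows the paper's argument: the same matrix $a_{ij}=|V_i\cap C_j|$, the same degree count $(r-1)n-(c_j-a_{ij})$, and the same realisation of an arbitrary matrix by the completed graph. The only cosmetic difference is that the paper first adds all edges between distinct parts and distinct colour classes and then computes degrees exactly, whereas you bound $\deg v$ directly by counting the forced non-neighbours in $V_i\cup C_j$, which is the same computation.
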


\begin{proof}
Let $G$ be a balanced $r$-partite graph with vertex classes
$V_1,\dots,V_r$ and a proper $t$-colouring with colour classes
$W_1,\dots,W_t$. Put
\[
a_{ij}:=|V_i\cap W_j|.
\]
Adding every edge whose endpoints lie in different vertex classes and in different colour
classes preserves both $r$-partiteness and $t$-colourability, and cannot decrease the minimum
degree. Thus it is enough to consider the completed graph determined by $A$.

If $x\in V_i\cap W_j$, then
\[
d(x)
=
rn-|V_i|-|W_j|+|V_i\cap W_j|
=
(r-1)n-c_j+a_{ij}.
\]
Hence the minimum degree of the completed graph is
\[
(r-1)n-D(A).
\]
Conversely, every admissible matrix $A$ can be realised by partitioning each $V_i$ into cells
of sizes $a_{ij}$ and joining exactly the pairs lying in distinct vertex classes and distinct
colour classes. This proves the equality.
\end{proof}

\section{Evaluation}

Throughout this section assume
\[
m\ge2,\qquad t\ge3,\qquad
2\le a\le\min\{m,t-1\},
\qquad
r=mt-a.
\]
Define
\begin{equation}\label{eq:ddef}
d:=
\left\lceil
\frac{(m-1)(r-1)n}{mt-2}
\right\rceil ,
\end{equation}
and write
\begin{equation}\label{eq:division}
d=(m-1)s+T,
\qquad
s=\left\lfloor\frac{d}{m-1}\right\rfloor,
\qquad
0\le T\le m-2.
\end{equation}
Finally define
\begin{equation}\label{eq:Hdef}
H:=n+t d+(t-2)s-rn.
\end{equation}

\begin{theorem}\label{thm:main}
With the notation above,
\[
\delta(n,r,t)
=
(r-1)n-d-\varepsilon,
\]
where $\varepsilon\in\{0,1\}$ is given by
\[
\varepsilon=
\begin{cases}
1,& H<0,\\
1,& H=0\text{ and }(n-s)\nmid T,\\
0,& \text{otherwise}.
\end{cases}
\]
Equivalently,
\[
\min_A D(A)=d+\varepsilon.
\]
\end{theorem}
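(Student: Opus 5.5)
The plan is to work entirely with the integer matrix extremum of Lemma~\ref{lem:matrix} and prove the two inequalities $\min_A D(A)\ge d+\varepsilon$ and $\min_A D(A)\le d+\varepsilon$ separately.

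\textbf{Upper bound.} I will build explicit matrices. Fix $D\in\{d,d+1\}$.
\begin{enumerate}
\item In most columns $j$, place $m-1$ pure rows (entry $n$). A pure row forces $c_j\le D+n$.
\item Use the remaining $a$-related rows as mixed rows. The decomposition $d=(m-1)s+T$ suggests making each mixed row have a small entry $s$ (or $s+1$) in one column and a large entry $n-s$ in another.
\item Arrange that in every column $c_j-a_{ij}\le D$ for each positive entry.
\end{enumerate}
Counting total mass $rn=\sum_j c_j$ against the available slack, the slack left after this bookkeeping should be exactly $H=n+td+(t-2)s-rn$. Then:
\begin{itemize}
\item If $H>0$ there is room to absorb the remainder $T$ freely.
\item If $H=0$ the remainder $T$ has to be spread in chunks of size $n-s$ over the large entries of mixed rows sharing one column. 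This is possible exactly when $(n-s)\mid T$.
\item For $D=d+1$ I will check that the same template, with $d$ replaced by $d+1$, always has nonnegative slack. This gives $\varepsilon\le 1$ unconditionally.
\end{itemize}
The arithmetic here is routine but must be done carefully in terms of $m,t,a$, using $2\le a\le\min\{m,t-1\}$ to guarantee enough rows and columns.

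\textbf{Lower bound $D(A)\ge d$.} I will reprove the Lo--Treglown--Zhao bound in matrix language. Let $p_j$ be the number of pure rows in column $j$. Then:
\begin{itemize}
\item A column containing a pure row satisfies $c_j\le D+n$.
\item A column with no pure row satisfies $c_j\le D+\min_i a_{ij}$.
\item For each mixed row, its smallest entry is at most $n/2$, and summing the constraints over the columns it meets gives an upper bound.
\end{itemize}
Averaging, and using that at most $m-1$ pure rows fit in a column when $D<mn$, yields $(mt-2)D\ge (m-1)(r-1)n$. Integrality then gives $D\ge d$.

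\textbf{Equality analysis (main obstacle).} I must show that $D(A)=d$ is impossible when $H<0$, and also when $H=0$ with $(n-s)\nmid T$. I would refine the counting by weighting columns according to whether they contain pure rows, and express the total deficit in terms of the entries of the mixed rows. The refined inequality should read $\sum(\text{nonnegative local slacks})\le H$. This proves the case $H<0$ directly.

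When $H=0$, every local slack must vanish. I expect this to force:
\begin{enumerate}
\item every mixed row has exactly two positive entries;
\item every column except one is saturated by $m-1$ pure rows;
\item the support graph on columns, with an edge for each mixed row, is connected.
\end{enumerate}
This is the spanning star in the abstract. With the star centred at one column, each leaf column receives an entry forced to equal $n-s$, or the complementary value $s$. The centre column's sum then pins down the excess $T$ as a sum of multiples of $n-s$, which gives $(n-s)\mid T$.

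The delicate points are (2) and the connectivity claim in (3). Showing that a non-star support, such as a longer path or two components, leaves strictly positive slack is where I expect the real work. I would first try an exchange argument: moving one unit between the two entries of a mixed row strictly decreases some column slack unless the star structure already holds.
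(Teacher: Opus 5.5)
Your high-level picture (two-colour mixed rows, a star of supports, leaf entries $s$, divisibility by $n-s$) is right, but the concrete structure you build on is wrong. Whenever $D<(m-1)n$ (true at $D=d$ whenever $H\le 0$, by Lemma~\ref{lem:basic}), a column containing $m-1$ pure rows can contain nothing else. Any further entry $x$ would have deficit $(m-1)n+x-x=(m-1)n>D$.

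Such a column is a dead block of sum $(m-1)n$ that every mixed row must avoid. In the counting of the next paragraph it costs $B_D-(m-1)n$ of the slack $H_D$; that cost grows linearly in $n$, while the slack is $O(t)$. Concretely, take $t=m=3$, $r=7$, $n=7$. Then $d=12$ is attained by three rows $(6,0,1)$, three rows $(0,6,1)$ and one row $(0,0,7)$. Yet no matrix with a column of two pure rows has $D(A)\le 12$, so your step~1 template fails.

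For the same reason your predicted equality structure (2) is false: at $H=0$ no column can hold $m-1$ pure rows. It also contradicts (3), since such columns would be isolated vertices of the support graph. In the paper's construction each leaf column carries $m$ rows:
when $H_D>0$, all are mixed with leaf entry $s_D$ or $s_D+1$;
when $H_D=0$, there are $m-u$ mixed rows with leaf entry $s_D$ and $u=T_D/(n-s_D)$ pure rows.
The remaining $m-a$ rows are centre-only. Necessity of divisibility comes from the \emph{leaf} column sums, not the centre. If leaf $j$ occurs in $K$ rows, $u$ of them pure, then $B=Ks+u(n-s)=ms+T$. You still need a separate check that $s>T$ when $(n-s)\nmid T$; this forces $K=m$ and $u(n-s)=T$.

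The missing key idea is the column bound $c_j\le B_D=\lfloor mD/(m-1)\rfloor$. A column with at most $m-1$ positive entries has sum at most $(m-1)n\le B_D$. A column with $k\ge m$ positive entries satisfies $D\ge c_j-\min_i a_{ij}\ge\frac{m-1}{m}c_j$. Your bound $c_j\le D+n$ for columns with a pure row is far too weak to produce slack exactly $H$. (Also, the threshold for ``at most $m-1$ pure rows'' is $D<(m-1)n$, not $D<mn$.)

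With the right bound, a mixed row using $k$ colours gives $rn\le n+kd+(t-k)(d+s)$, i.e.\ $0\le H-(k-2)s-(\text{nonnegative slacks})$. This kills $H<0$ at once, since the all-pure case has $D(A)\ge(m-1)n>d$. At $H=0$ it forces $k=2$ and $c_w=B$ for every column off each mixed edge. With $\max\{(m-1)n,\,mD/(m-1)\}$ as the off-support bound, the same counting also gives the bound $D(A)\ge d$, which your averaging sketch leaves vague.

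The star then needs no exchange argument. A short computation shows not all columns can equal $B$, so some column $z$ with $c_z<B$ lies on every mixed edge. A colour meeting no mixed row would have column sum $B$ made of whole rows. That is impossible, since $(m-1)n<B<mn$.
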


The proof is divided into several lemmas.

\begin{lemma}\label{lem:basic}
One has
\[
1\le s\le n.
\]
Moreover, if $H\le0$, then
\[
d<(m-1)n
\qquad\text{and hence}\qquad
s\le n-1.
\]
\end{lemma}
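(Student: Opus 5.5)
The plan is to argue directly from the definitions \eqref{eq:ddef}, \eqref{eq:division} and \eqref{eq:Hdef}, using only the constraints $m\ge2$, $t\ge3$, $2\le a\le\min\{m,t-1\}$ and $r=mt-a$. Both inequalities for $s$ reduce to bounds on $d$: since $s=\lfloor d/(m-1)\rfloor$, it suffices to show $m-1\le d\le (m-1)n$. The second claim will then follow by ruling out the extreme value $d=(m-1)n$ when $H\le0$.

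\emph{Upper bound.} Since $a\ge2$, we have $r-1=mt-a-1\le mt-3<mt-2$. Hence
\[
\frac{(m-1)(r-1)n}{mt-2}<(m-1)n .
\]
Because $(m-1)n$ is an integer, the ceiling satisfies $d\le (m-1)n$, and therefore $s\le n$.

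\emph{Lower bound.} I will show $\frac{(m-1)(r-1)n}{mt-2}>m-2$. Then $d\ge m-1$ and so $s\ge1$. As $n\ge1$, it is enough to check $(m-1)(r-1)>(m-2)(mt-2)$. Expanding with $r-1=mt-a-1$ gives
\[
(m-1)(r-1)-(m-2)(mt-2)=mt-(m-1)a+m-3 .
\]
Using $a\le t-1$, this is at least
\[
mt-(m-1)(t-1)+m-3=t+2m-4>0 .
\]
This inequality is the only place where the hypothesis $a\le t-1$ is used. It is the one step needing a small calculation, though I do not expect it to be an obstacle.

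\emph{The case $H\le0$.} Suppose $d=(m-1)n$. Then \eqref{eq:division} gives $s=n$ and $T=0$. Substituting into \eqref{eq:Hdef},
\[
H=n+t(m-1)n+(t-2)n-(mt-a)n=(a-1)n>0,
\]
since $a\ge2$ and $n\ge1$. This contradicts $H\le0$. Hence $d<(m-1)n$ whenever $H\le0$. Finally, $(m-1)s\le d<(m-1)n$ forces $s\le n-1$.
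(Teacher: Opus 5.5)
Your proof is correct and follows the paper's argument essentially line by line. You get the upper bound from $r-1<mt-2$, the lower bound from $(m-1)(r-1)-(m-2)(mt-2)=mt-(m-1)a+m-3=m(t-a+1)+a-3>0$, and the contradiction $H=(a-1)n>0$ when $d=(m-1)n$; the only cosmetic difference is that you substitute $a=t-1$ to obtain $t+2m-4$, whereas the paper reads positivity directly off $t-a+1\ge2$.
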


\begin{proof}
Since $a\ge2$,
\[
\frac{(m-1)(r-1)n}{mt-2}
<
(m-1)n,
\]
so $d\le(m-1)n$ and therefore $s\le n$.

For the lower bound, it is enough to take $n=1$. In this case,
\[
(m-1)(r-1)-(m-2)(mt-2)
=
m(t-a+1)+a-3>0,
\]
because $a\le t-1$. Thus the quantity inside the ceiling in
\eqref{eq:ddef} is greater than $m-2$, so $d\ge m-1$ and $s\ge1$.

If $d=(m-1)n$, then $s=n$ and $T=0$, so
\[
H
=
n+t(m-1)n+(t-2)n-rn
=
(a-1)n>0.
\]
Therefore $H\le0$ implies $d<(m-1)n$.
\end{proof}

For an integer $D\ge0$, define
\[
s_D:=\left\lfloor\frac{D}{m-1}\right\rfloor,
\qquad
B_D:=D+s_D
=
\left\lfloor\frac{mD}{m-1}\right\rfloor,
\]
and
\[
H_D:=n+tD+(t-2)s_D-rn.
\]
Thus $s_d=s$ and $H_d=H$.

\begin{lemma}[Column bound]\label{lem:column}
If $A$ is admissible and $D(A)\le D$, where $D\ge d$, then every column sum satisfies
\[
c_j\le B_D.
\]
\end{lemma}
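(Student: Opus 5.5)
The idea is to fix a column $j$ and split according to the number $k$ of positive entries in it. Each case gives an upper bound on $c_j$. Since $c_j$ is an integer, it then suffices to show that this bound is at most $mD/(m-1)$, because $B_D=\lfloor mD/(m-1)\rfloor$. Only the hypothesis $D(A)\le D$, the bound $D\ge d$, and the parameter restrictions $m\ge2$, $t\ge3$, $2\le a\le\min\{m,t-1\}$ will be used.

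\emph{Case $k\ge m$.} For every $i$ with $a_{ij}>0$, the hypothesis $D(A)\le D$ gives $c_j-a_{ij}\le D$, that is, $a_{ij}\ge c_j-D$. Summing over the $k$ positive entries gives $c_j\ge k(c_j-D)$, so $(k-1)c_j\le kD$. Hence
\[
c_j\le \frac{k}{k-1}D\le \frac{m}{m-1}D ,
\]
since $k/(k-1)$ is decreasing in $k$ and $k\ge m\ge 2$. Integrality of $c_j$ then gives $c_j\le B_D$.

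\emph{Case $k\le m-1$.} Each row sum is $n$, so every entry satisfies $a_{ij}\le n$, and therefore $c_j\le kn\le (m-1)n$. It remains to compare $(m-1)n$ with $mD/(m-1)$. Using $D\ge d\ge (m-1)(r-1)n/(mt-2)$, we get
\[
\frac{mD}{m-1}\ge \frac{m(r-1)n}{mt-2},
\]
so it suffices to check $m(r-1)\ge (m-1)(mt-2)$. Substituting $r=mt-a$ and expanding gives
\[
m(r-1)-(m-1)(mt-2)=m(t-a+1)-2 .
\]
Since $a\le t-1$, this is at least $2m-2>0$. Thus $(m-1)n\le mD/(m-1)$, and as $(m-1)n$ is an integer, $c_j\le (m-1)n\le B_D$. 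A column with $k=0$ has $c_j=0$ and is trivial.

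The main obstacle is noticing that the local inequality $c_j\le kD/(k-1)$ alone is too weak: for $k=2$ it only gives $2D$. Columns with few positive entries must instead be controlled by the row-sum constraint $a_{ij}\le n$ together with the lower bound on $d$. The choice of threshold $k\ge m$ versus $k\le m-1$ is what makes both estimates land exactly on $mD/(m-1)$. The only computation to verify is the displayed identity, which is where $a\le t-1$ enters.
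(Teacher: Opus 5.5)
Your proof is correct and follows the paper's route: for $k\le m-1$ you bound $c_j$ by $(m-1)n$ and compare with $B_D$ via $m(r-1)-(m-1)(mt-2)=m(t-a+1)-2>0$, and for $k\ge m$ you use an averaging bound giving $c_j\le \frac{k}{k-1}D\le\frac{m}{m-1}D$. The only differences are cosmetic: you sum $a_{ij}\ge c_j-D$ over the positive entries where the paper bounds the smallest positive entry by $c_j/k$, and you do the comparison $(m-1)n\le B_D$ inside the small-$k$ case rather than beforehand.
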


\begin{proof}
First note that
\[
\frac{m}{m-1}
\cdot
\frac{(m-1)(r-1)n}{mt-2}
=
\frac{m(r-1)n}{mt-2}
>
(m-1)n,
\]
because
\[
m(r-1)-(m-1)(mt-2)
=
m(t-a+1)-2>0.
\]
Since $D\ge d$, this implies
\[
B_D=\left\lfloor\frac{mD}{m-1}\right\rfloor
\ge(m-1)n.
\]

Fix a column $j$ and let $k$ be the number of its positive entries.
If $k\le m-1$, then $c_j\le(m-1)n\le B_D$.

If $k\ge m$, the smallest positive entry in column $j$ is at most $c_j/k$.
Hence
\[
D
\ge
c_j-\min_{i:a_{ij}>0}a_{ij}
\ge
\frac{k-1}{k}c_j
\ge
\frac{m-1}{m}c_j.
\]
Thus
\[
c_j\le \left\lfloor\frac{mD}{m-1}\right\rfloor=B_D.
\]
\end{proof}

\begin{lemma}[Star construction]\label{lem:construction}
Let $D\ge d$, write
\[
D=(m-1)s_D+T_D,\qquad 0\le T_D\le m-2.
\]
If $H_D>0$, then there exists an admissible matrix $A$ with $D(A)\le D$.

If $H_D=0$ and
\[
(n-s_D)\mid T_D,
\]
then there also exists an admissible matrix $A$ with $D(A)\le D$.
\end{lemma}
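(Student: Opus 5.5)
The plan is to build $A$ explicitly with a \emph{star} support, as the name of the lemma suggests. One column, say column $t$, is the centre. The other $t-1$ columns are leaves, and every mixed row has its support inside $\{j,t\}$ for a single leaf $j$. The target shape comes from the proof of Lemma~\ref{lem:column}: a column reaching the bound $B_D=D+s_D$ with $D(A)\le D$ should have about $m$ positive entries, with smallest entry $s_D$ and the rest summing to $D$. Since $D=(m-1)s_D+T_D$, the natural leaf column consists of $m-1$ \emph{heavy} entries equal to $s_D$ or $s_D+1$ ($T_D$ of them equal to $s_D+1$, the others to $s_D$), one \emph{light} entry $s_D$, and no pure rows. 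It then has column sum $D+s_D$ and satisfies $c_j-\min a_{ij}=D$.

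Since every leaf entry is less than $n$ (using $s_D\le n-1$, as in Lemma~\ref{lem:basic}, whenever $H_D\le0$), each of these rows is mixed and sends its remainder to the centre. The remainders are $n-s_D$ or $n-s_D-1$. The centre also absorbs some pure rows (entry $n$). The centre constraint is $c_t-\min_i a_{it}\le D$ over its positive entries. The key bookkeeping step is the identity
\[
H_D\ge0\iff rn\le (t-1)(D+s_D)+\bigl(D+n-s_D\bigr).
\]
It says the total mass $rn$ fits into $t-1$ leaves of sum $D+s_D$ plus a centre of sum at most $D+(\text{smallest centre entry})$, where the smallest centre entry is $n-s_D$. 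So I would run the following steps in order:
\begin{enumerate}
\item Fix the row budget. There are $r=mt-a$ rows, of which at most $m(t-1)$ are mixed, and $2\le a\le\min\{m,t-1\}$ leaves room.
\item Let the leaf columns carry as much as needed. When $H_D>0$ there is slack, so some leaves may carry fewer heavy or light entries, or smaller entries, until the total is exactly $rn$.
\item Fill the centre with the remainders and with the pure rows.
\item Check that the number of rows is exactly $r$.
\end{enumerate}
In the slack case $H_D>0$ I expect a greedy adjustment to work: reduce entries one unit at a time in the leaves, or convert mixed rows into pure centre rows, keeping every column inside its bound. I would also check the small cases $s_D=n$ separately, where all rows can be pure and columns have at most $m-1$ pure rows (recall $B_D\ge(m-1)n$).

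The main obstacle is the tight case $H_D=0$. Here every inequality must hold with equality: each leaf sum is exactly $D+s_D$, the centre sum is exactly $D+(n-s_D)$, and the minimum centre entry is exactly $n-s_D$. The centre must therefore be assembled from pure rows and remainders so that its total hits $D+n-s_D$ exactly. The heavy entries of size $s_D+1$ produce remainders $n-s_D-1$, which would lower the centre minimum below $n-s_D$. So I would instead redistribute the excess $T_D$ of the heavy entries inside the mixed rows. Concretely, I would replace the pattern ``$T_D$ entries of $s_D+1$'' by $T_D/(n-s_D)$ leaf entries equal to $n$, that is, by turning some heavy entries into full pure rows in the leaf. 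Each such replacement absorbs exactly $n-s_D$ units of excess, and it keeps all centre entries $\ge n-s_D$ and the row count correct. This is exactly where $(n-s_D)\mid T_D$ is needed.

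I would verify the resulting matrix directly:
\begin{itemize}
\item row sums equal $n$;
\item each leaf column has $c_j-\min a_{ij}\le D$, since its smallest entry is $s_D$ and the others sum to $D$;
\item the centre has $c_t-(n-s_D)\le D$;
\item the total number of rows equals $mt-a$, which is where the bounds on $a$ are used.
\end{itemize}
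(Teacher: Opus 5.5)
Your tight-case construction is exactly the paper's and is correct: each leaf gets $u=T_D/(n-s_D)$ pure leaf rows and $m-u$ mixed rows with leaf entry $s_D$, and there are $m-a$ pure centre rows.

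\textbf{The gap is the case $H_D>0$.} There you give no matrix, only the expectation that a greedy adjustment will work, and the adjustments you name go the wrong way. Once each leaf has $m$ rows and there are $m-a$ pure centre rows, the row count is $r$. Every row sums to $n$, so the total mass is automatically $rn$; there is nothing to adjust ``until the total is exactly $rn$''. The centre sum is then forced to be $rn$ minus the leaf sums. The centre is the binding column, and lowering leaf entries or converting mixed rows into pure centre rows only increases $c_t$.

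For example, take $m=t=3$, $a=2$ (so $r=7$), $n=12$ and $D=d=21$. Then $s_D=10$, $T_D=1$ and $H_D=1$. The natural pattern $(11,10,10)$ in each leaf gives $c_t=22$ with smallest centre entry $1$, so the centre deficit is exactly $21$. Lowering an $11$ to $10$ gives $c_t=23$ and a centre deficit of $22>21$.

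\textbf{The missing observation} is that in the slack case no adjustment is needed. Your bookkeeping identity takes $n-s_D$ as the smallest centre entry. With the heavy entries $s_D+1$ present, the smallest centre entry is actually $n-s_D-1$. Since $c_t=rn-(t-1)(D+s_D)$,
\[
c_t-(n-s_D-1)\le D \iff rn-n+1\le tD+(t-2)s_D \iff H_D\ge 1 .
\]
So the slack $H_D\ge1$ is exactly what pays for the one-unit drop in the centre minimum. The remaining centre deficits, $c_t-(n-s_D)$ and $c_t-n$, are then at most $D-1$. This is precisely how the paper handles $H_D>0$.

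A smaller slip concerns the large case. You cannot make every column contain at most $m-1$ pure rows, because $r=mt-a>(m-1)t$. Instead use $t-a$ groups of $m$ rows and $a$ groups of $m-1$ rows, which gives deficits at most $(m-1)n\le D$. This case should also cover all $D\ge(m-1)n$, i.e.\ $s_D\ge n$, not only $s_D=n$.
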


\begin{proof}
First consider the case $D\ge(m-1)n$.  Partition the $r$ rows into $t$ groups, with $t-a$ groups of size $m$ and $a$ groups of size $m-1$, and assign a distinct colour to each group.  The number of rows used is
\[
(t-a)m+a(m-1)=mt-a=r.
\]
For this matrix every nonzero entry equals $n$, and a colour supported on $k\le m$
rows contributes deficit $(k-1)n\le(m-1)n\le D$.  Hence $D(A)\le D$.

Hence, throughout the remainder of the proof, assume that
\[
D<(m-1)n.
\]
In particular $s_D\le n-1$, so $n-s_D\ge1$; moreover, if $T_D>0$, then
$s_D+1\le n$.

Use colour $t$ as a centre and colours $1,\dots,t-1$ as leaves.
Allocate $m$ rows to each leaf; the remaining
\[
r-m(t-1)=m-a
\]
rows will be centre-only rows.

First suppose $H_D>0$. For each leaf, place $s_D+1$ vertices in that leaf in
$T_D$ of its $m$ rows, and $s_D$ vertices in the leaf in the other
$m-T_D$ rows. Put the remaining vertices of each such row in the centre.
The $m-a$ remaining rows are entirely in the centre.

The column sum of each leaf is
\[
T_D(s_D+1)+(m-T_D)s_D
=
ms_D+T_D
=
D+s_D
=
B_D.
\]
Thus the deficit at the leaf entry of a mixed row is at most $D$.

The centre column sum is
\[
c_t=rn-(t-1)B_D.
\]
For a row in which the leaf entry equals $s_D+1$, the centre deficit is
\[
c_t-(n-s_D-1).
\]
The inequality that this is at most $D$ is precisely
\[
H_D\ge1.
\]
Since $H_D$ is an integer and $H_D>0$, this holds.
For a row with leaf entry $s_D$, the centre deficit satisfies
\[
c_t-(n-s_D)
=\bigl[c_t-(n-s_D-1)\bigr]-1
\le D-1.
\]
For a centre-only row,
\[
c_t-n
=\bigl[c_t-(n-s_D)\bigr]-s_D
\le D-1-s_D
\le D.
\]
Hence $D(A)\le D$.

Now suppose $H_D=0$ and put
\[
h_D:=n-s_D.
\]
By assumption, $u:=T_D/h_D$ is an integer.
For each leaf, take $u$ leaf-only rows and $m-u$ mixed rows.
In each mixed row put $s_D$ vertices in the leaf and $h_D$ vertices in the centre.
Again use $m-a$ centre-only rows.

Each leaf column has sum
\[
un+(m-u)s_D
=
ms_D+u(n-s_D)
=
ms_D+T_D
=
B_D.
\]
Since $H_D=0$,
\[
rn=n+2D+(t-2)B_D,
\]
and therefore
\[
c_t
=
rn-(t-1)B_D
=
n+D-s_D.
\]
The centre deficit in every mixed row is exactly
\[
c_t-(n-s_D)=D.
\]
The other deficits are at most $D$, so $D(A)\le D$.
\end{proof}

It remains to prove that the boundary divisibility condition is necessary.

\begin{lemma}[Boundary structure]\label{lem:boundary}
Assume $H=0$ and let $A$ be admissible with $D(A)\le d$.
Then the supports of all rows containing at least two colours form a spanning star on the
$t$ colours.

For every leaf colour $j$, every mixed row containing $j$ has leaf entry exactly $s$.
Consequently
\[
(n-s)\mid T.
\]
\end{lemma}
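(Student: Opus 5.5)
The plan is to turn $H=0$ into an equality case of a counting inequality and then read off the structure from the forced equalities. By \cref{lem:basic}, $H=0$ gives $s\le n-1$ and $d<(m-1)n$. Writing $B:=B_d=d+s$, the hypothesis $H=0$ says exactly
\[
rn=n+2d+(t-2)B .
\]
\Cref{lem:column} gives $c_\ell\le B$ for every column $\ell$.

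\textbf{The key inequality.} Take any mixed row $i$, and let $j\ne k$ be two colours in its support. The deficit condition at these two entries gives
\[
c_j\le d+a_{ij},\qquad c_k\le d+a_{ik}.
\]
Hence $c_j+c_k\le 2d+a_{ij}+a_{ik}\le 2d+n$. Bounding the remaining $t-2$ columns by $B$ then gives
\[
rn=\sum_\ell c_\ell\le 2d+n+(t-2)B=rn .
\]
So every inequality in this chain is an equality. Concretely:
\begin{itemize}
\item every mixed row has exactly two positive entries, so $a_{ij}+a_{ik}=n$;
\item both of its deficits equal $d$, so $a_{ij}=c_j-d$ and $a_{ik}=c_k-d$;
\item every column outside $\{j,k\}$ has sum exactly $B$.
\end{itemize}
This already shows the supports are edges of a graph $\Gamma$ on the $t$ colours.

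\textbf{Star structure.} Suppose two edges of $\Gamma$ are disjoint, or three edges form a triangle. Applying the equality statement to each edge shows that every column has sum exactly $B$. Then every mixed row has entries $B-d=s$ on both sides, so $2s=n$, and also $tB=rn$.

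Note that \cref{lem:construction} gives $D(A)\le d$, and hence $H_d\ge 0$, for the $a$ values considered. I expect this all-columns-equal configuration to be contradicted by arithmetic: compare $tB=rn$ with $rn=n+2d+(t-2)B$ and $B=d+n/2$, combined with $r=mt-a$, $a\ge2$, $d<(m-1)n$ and the ceiling definition of $d$. If that arithmetic does not close, I would instead analyse a column with sum $B$ whose entries are all $s$ or $n$, using $B\ge(m-1)n$ from the proof of \cref{lem:column}.

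Once this configuration is excluded, the edges of $\Gamma$ pairwise intersect and contain no triangle, so they form a star with some centre $t_0$. Every colour other than $t_0$ that lies on an edge of $\Gamma$ has column sum $B$.

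\textbf{Spanning.} Suppose some colour $j$ appears only in pure rows. Then $c_j=qn$ with $q$ positive entries, and the deficit condition gives $(q-1)n\le d<(m-1)n$, so $q\le m-1$. If moreover $j$ is not the centre, then $c_j=B$ by the equality statement applied to any edge. So $B=qn\le(m-1)n$, which together with $B\ge(m-1)n$ forces $B=(m-1)n$. I expect this to contradict $d<(m-1)n$ and $H=0$. The case where $\Gamma$ is empty also has to be excluded via the counting above, by the same comparison.

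This spanning step, together with excluding the all-columns-equal configuration, is the part I expect to be the main obstacle. All the rest is forced by the equality chain.

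\textbf{Leaf entries and divisibility.} For a leaf colour $j$, every mixed row containing $j$ has $a_{ij}=c_j-d=B-d=s$. So column $j$ consists of $p$ entries equal to $s$ (from mixed rows) and $q$ entries equal to $n$ (from pure rows), with
\[
ps+qn=B=ms+T .
\]
I will then show $p+q=m$. Since $d<(m-1)n$ we have $B<mn$, and every entry is at least $s$. If $p+q\le m-1$, then $B\le(m-1)n$, which the previous step excludes. If $p+q\ge m+1$, then the smallest entry $s$ has deficit $B-s=d$, but counting $(p+q)s\le B=ms+T$ with $T\le m-2$ rules this out once we use $s\ge1$ and $n-s\ge1$.

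With $p+q=m$ we get $ms+q(n-s)=ms+T$, so $T=q(n-s)$ and therefore $(n-s)\mid T$.
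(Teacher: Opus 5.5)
Your outline matches the paper's: equality in the column count forces two-colour supports with both deficits equal to $d$; excluding the all-columns-equal-$B$ configuration gives a star; $B$ not being a multiple of $n$ makes it spanning; leaf entries equal $s$. Your two-colour form of the key inequality is a valid variant of the paper's $k$-colour count. However, the two arithmetic exclusions that the star and spanning steps rest on are only announced (``I expect''). The inputs that actually decide them, $T\le m-2$ and the hypothesis $a\le t-1$, are not among the ingredients you list.

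Here is what is needed:
\begin{itemize}
\item If every column equals $B$, then $n=2s$. Substituting $d=(m-1)s+T$ into $H=0$ gives $(mt-2a)s=tT$, so $T=(mt-2a)s/t>(m-2)s\ge m-2$ because $a<t$. This contradicts $T\le m-2$.
\item If $B=(m-1)n$, then $d=(m-1)n-s$, and $H=0$ becomes $(a+1-t)n=2s$. This is impossible since $a\le t-1$ and $s\ge1$.
\item The empty-$\Gamma$ case is pigeonhole: $r>(m-1)t$ forces some colour to fill $m$ whole rows, giving deficit $(m-1)n>d$.
\end{itemize}
Your remark that \cref{lem:construction} yields $H_d\ge0$ is a non sequitur, since that lemma only produces matrices from $H_D>0$; you do not use it, though.

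The more serious gap is in the last step: the claim $p+q=m$ is false in general. Take $t=3$, $m=5$ (so $a=2$, $r=13$) and $n=4$. Then $d=\lceil 192/13\rceil=15$, $s=T=3$, $B=18$ and $H=4+45+3-52=0$. Consider the matrix with six rows $(1,3,0)$, six rows $(1,0,3)$ and one row $(4,0,0)$. It has column sums $16,18,18$ and $D(A)=15=d$, and its mixed rows form a star centred at the first colour. Yet each leaf column has $p=6=m+1$ and $q=0$.

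Your bound $(p+q)s\le ms+T$ excludes $p+q\ge m+1$ only when $s>T$. That does not follow from $s\ge1$, $n-s\ge1$ and $T\le m-2$; here $s=T$. The missing step, as in the paper, is to argue by contradiction:
\begin{itemize}
\item If $(n-s)\nmid T$, then $n-s\ge2$.
\item Rewrite $H=0$ as $(mt-2)(n-s)=(a-1)n+tT$. If $s\le T$, using $n\le T+(n-s)$ this gives $(mt-a-1)(n-s)\le(a+t-1)(m-2)$.
\item With $n-s\ge2$ this is impossible, because $2(mt-a-1)-(a+t-1)(m-2)=m(t-a+1)+2t-4>0$.
\item Hence $s>T$, which forces $p+q=m$ and $q(n-s)=T$, contradicting $(n-s)\nmid T$.
\end{itemize}
The divisibility conclusion survives in the example above only because $n-s=1$ there.
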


\begin{proof}
By Lemma~\ref{lem:basic}, $d<(m-1)n$.
Thus not all rows can be monochromatic: otherwise, since
$r=mt-a>(m-1)t$, some colour would contain at least $m$ entire rows, giving
$D(A)\ge(m-1)n>d$.

Let a row use exactly $k\ge2$ colours.
For each colour present in the row,
\[
c_j\le d+a_{ij},
\]
while every absent column is at most
\[
B:=B_d=d+s
\]
by Lemma~\ref{lem:column}. Summing gives
\[
rn
\le
n+kd+(t-k)B.
\]
Since $H=0$,
\[
rn=n+2d+(t-2)B.
\]
As $B-d=s>0$, the last two displays force $k=2$, and equality holds throughout.
Thus, if a mixed row has support $\{u,v\}$, then
\begin{equation}\label{eq:equality-columns}
c_w=B
\quad\text{for every }w\notin\{u,v\},
\end{equation}
and
\[
c_u=d+a_{iu},\qquad
c_v=d+a_{iv}.
\]

Let $\Gamma$ be the graph on the $t$ colours whose edges are the supports of the mixed rows.
First, not all columns can have sum $B$.
If $c_j=B$ for every $j$, then $tB=rn$.
Using $H=0$ and $B=d+s$, this yields $n=2s$.
Writing $d=(m-1)s+T$ and substituting into $H=0$ gives
\[
(mt-2a)s=tT.
\]
But $a\le t-1$, so
\[
\frac{mt-2a}{t}>m-2.
\]
Since $s\ge1$, this forces $T>m-2$, contradicting $T\le m-2$.

Since not all columns have sum $B$, choose a colour $z$ with $c_z<B$.
By \eqref{eq:equality-columns}, every mixed edge must contain $z$: if a mixed edge avoided
$z$, then that equation would force $c_z=B$.  Hence all mixed edges have the common centre
$z$.

Next, this star is spanning. First observe that
\[
B\ge(m-1)n.
\]
Equality is impossible when $H=0$: if $B=(m-1)n$, then
$d=B-s=(m-1)n-s$, and substituting into $H=0$ gives
\[
(a+1-t)n=2s,
\]
which is impossible because $a\le t-1$ and $s>0$.
Thus
\[
(m-1)n<B.
\]
Also $d<(m-1)n$ implies
\[
B=d+s<mn.
\]
Hence $B$ is not a multiple of $n$.

If a colour were not incident with any edge of $\Gamma$, it would appear only in
monochromatic rows. But by \eqref{eq:equality-columns}, any such colour has column sum $B$,
which would then be a multiple of $n$, a contradiction.
Thus $\Gamma$ is a spanning star.

Let colour $0$ be its centre and let $j$ be a leaf.
Since $t\ge3$, there is another leaf, so a mixed edge avoiding $j$ shows that $c_j=B$.
For a mixed row containing leaf $j$, equality in
$c_j\le d+a_{ij}$ gives
\[
a_{ij}=B-d=s.
\]
Let $u$ be the number of monochromatic rows of colour $j$, and let $K$ be the total number
of rows in which colour $j$ appears. Then
\[
B=(K-u)s+un=Ks+u(n-s).
\]
Since $B>(m-1)n$, it follows that $K\ge m$. On the other hand,
\[
B=ms+T,
\]
so
\begin{equation}\label{eq:leaf-eqn}
(K-m)s+u(n-s)=T.
\end{equation}

If $(n-s)\nmid T$, then $n-s\ge2$.
In this case, $s>T$.
Suppose instead that $s\le T$.
From $H=0$ one obtains
\[
(mt-2)(n-s)=(a-1)n+tT.
\]
Since $n=s+(n-s)\le T+(n-s)$ and $T\le m-2$,
\[
(mt-a-1)(n-s)
\le
(a+t-1)(m-2).
\]
As $n-s\ge2$, this implies
\[
2(mt-a-1)
\le
(a+t-1)(m-2).
\]
But
\[
2(mt-a-1)-(a+t-1)(m-2)
=
m(t-a+1)+2t-4>0,
\]
a contradiction. Thus $s>T$.

Equation \eqref{eq:leaf-eqn} and $K\ge m$ now force $K=m$, and hence
\[
u(n-s)=T.
\]
Therefore $(n-s)\mid T$, proving the final assertion.
\end{proof}

\begin{lemma}[One-step sufficiency]\label{lem:dplus1}
If $D=d$ is not attainable, then $D=d+1$ is attainable.
\end{lemma}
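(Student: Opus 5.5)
The plan is to apply the star construction, Lemma~\ref{lem:construction}, with $D=d+1$. That lemma needs only $D\ge d$ together with either $H_D>0$, or $H_D=0$ plus a divisibility condition. So it suffices to show $H_{d+1}>0$, and I will in fact show this unconditionally, without using the hypothesis that $d$ is not attainable.

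First I compare $H_{d+1}$ with $H_d=H$. Since $s_{d+1}\ge s_d=s$ and $t\ge3$,
\[
H_{d+1}-H=t+(t-2)(s_{d+1}-s)\ge t .
\]
Hence it is enough to prove $H>-t$.

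Next I bound $H$ from below using the definition \eqref{eq:ddef} of $d$ and the division \eqref{eq:division}. From $d=(m-1)s+T$ with $T\le m-2$ we get $s\ge (d-(m-2))/(m-1)$. Substituting into \eqref{eq:Hdef} gives
\[
H\ge n-rn+d\Bigl(t+\frac{t-2}{m-1}\Bigr)-\frac{(t-2)(m-2)}{m-1}
= n-rn+\frac{(mt-2)d}{m-1}-\frac{(t-2)(m-2)}{m-1}.
\]
By \eqref{eq:ddef}, $(mt-2)d/(m-1)\ge (r-1)n$, and this cancels $n-rn$. We are left with
\[
H\ge -\frac{(t-2)(m-2)}{m-1}>-(t-2).
\]
Therefore $H_{d+1}\ge H+t>2>0$.

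Finally, Lemma~\ref{lem:construction} applies with $D=d+1\ge d$ and $H_{d+1}>0$, so it gives an admissible $A$ with $D(A)\le d+1$. That lemma also covers the subcase $d+1\ge(m-1)n$ through its monochromatic grouping, so no separate treatment is needed.

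The only step needing care is the lower bound on $H$. There one must use the floor relation for $s$ in the correct direction, and check that the ceiling in $d$ exactly absorbs the term $(r-1)n$. Everything else is bookkeeping.
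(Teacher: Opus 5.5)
Your proposal is correct and follows essentially the same route as the paper. It lower-bounds $H_d>-(t-2)$ using the ceiling in $d$ and the floor in $s$, uses $H_{D+1}-H_D\ge t$ to get $H_{d+1}\ge 3$, and applies the star construction at $D=d+1$. The one small difference is that the paper invokes the non-attainability hypothesis to ensure $d+1\le (m-1)n$. As you observe, this is redundant, because Lemma~\ref{lem:construction} already covers $D\ge (m-1)n$, so your argument shows that $d+1$ is always attainable.
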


\begin{proof}
Put
\[
\Delta^*
=
\frac{(m-1)(r-1)n}{mt-2}.
\]
The continuous identity
\[
n+t\Delta^*+(t-2)\frac{\Delta^*}{m-1}-rn=0
\]
holds directly from the definition of $\Delta^*$.
Since $d=\lceil\Delta^*\rceil$ and
\[
\left\lfloor\frac{d}{m-1}\right\rfloor
>
\frac{d}{m-1}-1,
\]
it follows that
\[
H_d>-(t-2).
\]
As $H_d$ is an integer,
\[
H_d\ge-(t-3).
\]

For every integer $D$,
\[
H_{D+1}-H_D
=
t+(t-2)
\left(
\left\lfloor\frac{D+1}{m-1}\right\rfloor
-
\left\lfloor\frac{D}{m-1}\right\rfloor
\right)
\ge t.
\]
Therefore
\[
H_{d+1}\ge3.
\]
If $d$ is not attainable, then $H_d\le0$ by Lemma~\ref{lem:construction}.
In particular $d<(m-1)n$, so $d+1\le(m-1)n$ and the star construction is valid.
Lemma~\ref{lem:construction} with $D=d+1$ gives an admissible matrix satisfying
$D(A)\le d+1$.
\end{proof}

\begin{proof}[Proof of Theorem~\ref{thm:main}]
The upper bound of Lo--Treglown--Zhao~\cite[Proposition~5.1]{LTZ} for $\delta(n,r,t)$ is equivalent to the following
lower bound on the matrix deficit:
\[
D(A)\ge d
\]
for every admissible matrix $A$.

If $H>0$, Lemma~\ref{lem:construction} constructs a matrix with $D(A)\le d$.
If $H=0$ and $(n-s)\mid T$, the second construction in
Lemma~\ref{lem:construction} does the same.

If $H<0$, the argument used in Lemma~\ref{lem:boundary} for a mixed row gives
\[
rn\le n+2d+(t-2)(d+s)<rn,
\]
unless all rows are monochromatic; but the latter possibility gives
$D(A)\ge(m-1)n>d$. Hence $d$ is impossible.

If $H=0$ and $(n-s)\nmid T$, Lemma~\ref{lem:boundary} shows that $d$ is impossible.

Finally, Lemma~\ref{lem:dplus1} shows that whenever $d$ is impossible, $d+1$ is attainable.
Thus the minimum matrix deficit is exactly $d+\varepsilon$, as claimed.
Lemma~\ref{lem:matrix} converts this to the stated formula for $\delta(n,r,t)$.
\end{proof}

\section{Consequences for multipartite clique thresholds}

The formula for $\delta$ also gives a convenient integer form of the
Andr\'asfai--Erd\H{o}s--S\'os connection used by Lo--Treglown--Zhao.

\begin{corollary}[Integer AES criterion]\label{cor:aes-integer}
Under the hypotheses of Theorem~\ref{thm:main}, if
\[
(r-1)n-d-\varepsilon
\ge
\left\lfloor\frac{3t-4}{3t-1}rn\right\rfloor,
\]
then
\[
f(n,r,t+1)=(r-1)n-d-\varepsilon.
\]
\end{corollary}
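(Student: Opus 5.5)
The plan is to prove the two inequalities $f(n,r,t+1)\ge(r-1)n-d-\varepsilon$ and $f(n,r,t+1)\le(r-1)n-d-\varepsilon$ separately.

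For the lower bound I will use a trivial observation. Any balanced $r$-partite graph with $\chi(G)\le t$ is $K_{t+1}$-free. Hence $f(n,r,t+1)\ge\delta(n,r,t)$, and Theorem~\ref{thm:main} together with Lemma~\ref{lem:matrix} gives $\delta(n,r,t)=(r-1)n-d-\varepsilon$.

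For the upper bound, let $G$ be balanced $r$-partite with parts of size $n$, $K_{t+1}$-free, and with $\delta(G)=f(n,r,t+1)$. If $\delta(G)\le(r-1)n-d-\varepsilon$ there is nothing to prove. Otherwise $\delta(G)>(r-1)n-d-\varepsilon$, and the hypothesis of the corollary gives $\delta(G)>\lfloor\frac{3t-4}{3t-1}rn\rfloor$. Since $\delta(G)$ is an integer, this means $\delta(G)>\frac{3t-4}{3t-1}\,rn$, where $rn=|V(G)|$. The Andrásfai--Erdős--Sós theorem says that a $K_{t+1}$-free graph on $N$ vertices with minimum degree greater than $\frac{3t-4}{3t-1}N$ is $t$-partite. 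So $\chi(G)\le t$, and therefore $\delta(G)\le\delta(n,r,t)=(r-1)n-d-\varepsilon$, contradicting the assumption. Hence $f(n,r,t+1)\le(r-1)n-d-\varepsilon$, and combined with the lower bound this gives equality.

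The only delicate points are the following. First, I must check that the form of Andrásfai--Erdős--Sós being invoked, namely $\delta>\frac{3r'-4}{3r'-1}N$ forcing $r'$-partiteness for $K_{r'+1}$-free graphs, applies with $r'=t$ and $N=rn$. Second, the floor must be handled correctly, so that the inequality for the integer $\delta(G)$ upgrades to a strict inequality against the real threshold; this is exactly why the corollary states the hypothesis with $\lfloor\cdot\rfloor$. No multipartite structure is needed for the AES step, since the theorem applies to the underlying graph.
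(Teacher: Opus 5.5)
Your proof is correct and follows the paper's argument. The integrality of the minimum degree turns the floored hypothesis into a strict inequality against $\frac{3t-4}{3t-1}rn$, and Andr\'asfai--Erd\H{o}s--S\'os then forces $t$-colourability, contradicting the definition of $\delta(n,r,t)$. The only difference is that you state the trivial bound $f(n,r,t+1)\ge\delta(n,r,t)$ explicitly, which the paper uses implicitly.
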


\begin{proof}
By Theorem~\ref{thm:main}, the left-hand side is $\delta(n,r,t)$.  Suppose for a
contradiction that $f(n,r,t+1)>\delta(n,r,t)$.  Since both quantities are integers,
\[
f(n,r,t+1)\ge\delta(n,r,t)+1
>\frac{3t-4}{3t-1}rn.
\]
The Andr\'asfai--Erd\H{o}s--S\'os theorem, in the form used in
\cite[Corollary~3.2]{LTZ}, then forces an extremal $K_{t+1}$-free graph to be
$t$-colourable, contradicting the definition of $\delta(n,r,t)$.
\end{proof}

The following is immediate from Theorem~\ref{thm:main} and the further equality criteria of
Lo--Treglown--Zhao~\cite[Corollaries~3.3 and~4.3]{LTZ}.

\begin{corollary}\label{cor:f}
Under the hypotheses of Theorem~\ref{thm:main}, suppose in addition that either
\[
r\ge a(3t-1),
\]
or
\[
\frac{r}{t(3t-1)(m-1)}
-
\frac{a}{t(m-1)}
+
\frac{a-1}{tm-2}
\ge
\frac1n.
\]
Then
\[
f(n,r,t+1)
=
(r-1)n-d-\varepsilon,
\]
with $d$ and $\varepsilon$ as in Theorem~\ref{thm:main}.
\end{corollary}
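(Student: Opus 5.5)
The statement is Corollary~\ref{cor:f}. My plan is to reduce it to Corollary~\ref{cor:aes-integer} together with the equality criteria of Lo--Treglown--Zhao. The logical shape is the same in both cases. Each extra hypothesis is exactly one of the conditions under which \cite[Corollaries~3.3 and~4.3]{LTZ} give $f(n,r,t+1)=\delta(n,r,t)$ in the range $(m-1)t<r<mt$. Once this equality is available, Theorem~\ref{thm:main} substitutes $\delta(n,r,t)=(r-1)n-d-\varepsilon$.

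\emph{First case.} Suppose $r\ge a(3t-1)$. I would check that this is the hypothesis of \cite[Corollary~3.3]{LTZ} once the residue is written as $r=mt-a$. If it is not literally that hypothesis, I would instead verify the inequality of Corollary~\ref{cor:aes-integer} directly. For this, I would compare the construction value with the Andr\'asfai--Erd\H{o}s--S\'os threshold. The comparison $(r-1)n-\frac{(m-1)(r-1)n}{mt-2}$ versus $\frac{3t-4}{3t-1}rn$ reduces, after clearing denominators, to a linear inequality in $r$. Its leading term should be equivalent to $r\ge a(3t-1)$, using $mt-2=r+a-2$.

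\emph{Second case.} I would verify the displayed fractional inequality as follows. The integer value $(r-1)n-d-\varepsilon$ is at least
\[
(r-1)n-\frac{(m-1)(r-1)n}{mt-2}-2,
\]
since $d<\Delta^*+1$ and $\varepsilon\le1$. Dividing the required inequality by $t(m-1)n$, the condition
\[
\frac{r}{t(3t-1)(m-1)}-\frac{a}{t(m-1)}+\frac{a-1}{tm-2}\ge\frac1n
\]
should be exactly what is needed to absorb the rounding loss. This is presumably the form of \cite[Corollary~4.3]{LTZ}, whose proof uses the lower construction. Here I would rely on Lemma~\ref{lem:dplus1} and Theorem~\ref{thm:main} to control the rounding uniformly.

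\emph{Main obstacle.} The main obstacle is the bookkeeping of the additive constant. The LTZ criteria were stated for their construction value, which may be smaller than $\delta(n,r,t)$ by a constant. Our $\delta$ is at least their lower bound, so the AES inequality transfers. Then Corollary~\ref{cor:aes-integer} concludes $f=\delta$ in both cases.
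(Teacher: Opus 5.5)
Your main line, invoking \cite[Corollaries~3.3 and~4.3]{LTZ} to get $f(n,r,t+1)=\delta(n,r,t)$ and then substituting Theorem~\ref{thm:main}, is exactly the paper's proof. Your second-case fallback via Corollary~\ref{cor:aes-integer} also goes through, because that hypothesis is equivalent to $(r-1)n-\Delta^*-\frac{3t-4}{3t-1}rn\ge m-1\ge 1$, which absorbs the rounding loss $d+\varepsilon<\Delta^*+2$.

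Only your first-case fallback needs repair. When $r=a(3t-1)$ the continuous gap is only $n(m-1)(a-1)/(mt-2)$, which can be below $1$ (e.g.\ $n=1$, $r=16$, $t=3$). A direct check should instead use the whole-part colouring bound $\delta(n,r,t)\ge(r-m)n$, which is at least $\frac{3t-4}{3t-1}rn$ exactly when $r\ge a(3t-1)$.
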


\begin{proof}
Under either displayed condition, Lo--Treglown--Zhao~\cite[Corollaries~3.3 and~4.3]{LTZ} proved
\[
f(n,r,t+1)=\delta(n,r,t).
\]
Apply Theorem~\ref{thm:main}.
\end{proof}

\begin{remark}
For $t=3$ and $a=2$, the theorem treats
\[
r=3m-2=4,7,10,13,\dots
\]
within the valid range $r>t$. In particular, the $r=7$ instance recovers the integer
three-colourable extremum obtained in~\cite{Kasugai}.
\end{remark}

\section{The \texorpdfstring{$K_4$}{K4}-free case}

I now specialize to $t=3$.  Combined with the previously solved $r=t+1$ case~\cite{LTZ} and the
$r=7$ result in~\cite{Kasugai}, the chromatic extremum obtained above determines the $K_4$-free
instance of the 1975 multipartite clique problem.  The only additional work needed is to remove
the size restrictions at $r=10$ and $r=13$; these are handled for every $n$ using the
value of $\delta(n,r,3)$.

Let
\[
r=3m-2,\qquad m\ge3,
\]
and write
\[
n=qr+\rho,\qquad 0\le \rho<r.
\]
Define
\[
\eta(n,r)=1
\]
if there is an integer $h$ with
\[
1\le h\le m-2,\qquad
\rho=3h+1,\qquad
q+1\nmid m-h-1,
\]
and define $\eta(n,r)=0$ otherwise.

\begin{corollary}\label{cor:t3-delta}
For $r=3m-2$ with $m\ge3$,
\[
\delta(n,r,3)
=
(r-1)n-
\left\lceil
\frac{(m-1)(r-1)n}{r}
\right\rceil
-\eta(n,r).
\]
\end{corollary}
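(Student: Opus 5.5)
The plan is to specialize Theorem~\ref{thm:main} to $t=3$, $a=2$ (the hypotheses $2\le a\le\min\{m,t-1\}$ hold since $m\ge3$) and show that the correction $\varepsilon$ there coincides with $\eta(n,r)$. The first observation is that $mt-2=3m-2=r$, so the quantity $d$ in \eqref{eq:ddef} is exactly the ceiling appearing in the corollary. It then remains to compute $s$, $T$ and $H$ explicitly in terms of $n=qr+\rho$.

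\textbf{Computing $d$, $s$, $T$.} Write
\[
\frac{(m-1)(r-1)n}{r}=(m-1)n-(m-1)q-\frac{(m-1)\rho}{r},
\]
and put $f:=\lfloor (m-1)\rho/r\rfloor$. Since $\rho<r$, we have $0\le f\le m-2$, and therefore $d=(m-1)(n-q)-f$. From \eqref{eq:division} this gives two cases:
\begin{itemize}
\item if $f=0$, then $s=n-q$ and $T=0$;
\item if $1\le f\le m-2$, then $s=n-q-1$ and $T=m-1-f$.
\end{itemize}

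\textbf{Computing $H$.} Substituting into \eqref{eq:Hdef} with $t=3$, $r=3m-2$, the $n$-terms cancel and $q$ appears only through $qr$.
\begin{itemize}
\item For $f=0$ this gives $H=\rho\ge0$. Moreover $H=0$ forces $T=0$, which is divisible by $n-s$. So $\varepsilon=0$ here, matching $\eta=0$: $f=0$ means $\rho<r/(m-1)<4$, and $\rho=3h+1$ with $h\ge1$ is impossible.
\item For $f\ge1$ the same substitution gives $H=\rho-3f-1$.
\end{itemize}

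\textbf{Sign of $H$ when $f\ge1$.} Use the defining inequality $(m-1)\rho\ge fr$. This gives
\[
\rho\ge \frac{f(3m-2)}{m-1}=3f+\frac{f}{m-1}>3f,
\]
so $\rho\ge3f+1$ and $H\ge0$. Hence the case $H<0$ never occurs, and $\varepsilon=1$ exactly when $\rho=3f+1$ and $(n-s)\nmid T$. Here $n-s=q+1$ and $T=m-1-f$.

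\textbf{Matching with $\eta$.} Conversely, suppose $\rho=3h+1$ with $1\le h\le m-2$. I check that $\lfloor (m-1)(3h+1)/r\rfloor=h$ by two inequalities:
\[
(m-1)(3h+1)-hr=m-1-h\ge0,
\qquad
(h+1)r-(m-1)(3h+1)=h+2m-1>0.
\]
So $\rho=3h+1$ with $h$ in this range is equivalent to $\rho=3f+1$ with $f\ge1$, taking $h=f$. The divisibility condition then reads $(q+1)\nmid(m-h-1)$, which is exactly the definition of $\eta(n,r)=1$.

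Thus $\varepsilon=\eta(n,r)$, and Theorem~\ref{thm:main} yields the stated formula. The only delicate step is the floor bookkeeping: verifying that $f\le m-2$, so that $T=m-1-f$ lies in $[1,m-2]$, and proving $\rho\ge3f+1$. Both follow from the displayed inequalities above.
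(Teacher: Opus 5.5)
Your proposal is correct and follows essentially the same route as the paper: you specialize Theorem~\ref{thm:main} to $t=3$, $a=2$, write $n=qr+\rho$, get $d=(m-1)(n-q)-f$, read off $s$, $T$ and $H$ ($H=\rho$ if $f=0$, $H=\rho-3f-1$ if $f\ge1$), and show $H\ge0$ with equality exactly when $\rho=3f+1$. Your version is slightly tighter than the paper's: you drop the unnecessary $\gcd(m-1,r)=1$ remark, and you explicitly check that $\rho=3h+1$ with $1\le h\le m-2$ forces $\lfloor (m-1)\rho/r\rfloor=h$, a converse the paper leaves implicit.
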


\begin{proof}
Apply Theorem~\ref{thm:main} with $t=3$ and $a=2$, so $r=3m-2$ and
\[
d=\left\lceil\frac{(m-1)(r-1)n}{r}\right\rceil.
\]
Write $n=qr+\rho$ with $0\le\rho<r$, and put
\[
k:=\left\lfloor\frac{(m-1)\rho}{r}\right\rfloor.
\]
Since $\gcd(m-1,r)=1$, the number $(m-1)\rho/r$ is nonintegral whenever
$0<\rho<r$.  Hence
\[
d=(m-1)(n-q)-k.
\]
If $k=0$, then $s=n-q$, $T=0$, and
\[
H=n+3d+s-rn=\rho\ge0.
\]
In particular the case $\rho=0$ has $H=0$ but $T=0$, so it produces no
one-unit correction.

Now suppose $k\ge1$.  Since $0<k\le m-2$,
\[
s=n-q-1,\qquad T=m-k-1,
\]
and therefore
\[
H=\rho-3k-1.
\]
Moreover, because $k<(m-1)\rho/r$ and $r=3m-2$,
\[
\rho>\frac{kr}{m-1}=3k+\frac{k}{m-1},
\]
so the integrality of $\rho$ gives $\rho\ge3k+1$.  Thus $H<0$ never occurs.
Furthermore,
\[
H=0
\iff
\rho=3k+1.
\]
Writing $h=k$, this is exactly
\[
\rho=3h+1,\qquad 1\le h\le m-2.
\]
In this boundary case
\[
n-s=q+1,\qquad T=m-h-1.
\]
Theorem~\ref{thm:main} now says that the correction is one precisely when
\[
q+1\nmid m-h-1,
\]
which is the definition of $\eta(n,r)$.
\end{proof}

\begin{proposition}\label{prop:r10r13}
For every $n\ge1$,
\[
f(n,10,4)=\delta(n,10,3)
\qquad\text{and}\qquad
f(n,13,4)=\delta(n,13,3).
\]
\end{proposition}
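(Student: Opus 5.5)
The plan is to verify the hypothesis of Corollary~\ref{cor:aes-integer} for $t=3$ and $r\in\{10,13\}$ and every $n\ge1$, using the exact value from Corollary~\ref{cor:t3-delta}. For $t=3$ we have $\frac{3t-4}{3t-1}=\frac58$, so we must show
\[
(r-1)n-\left\lceil\frac{(m-1)(r-1)n}{r}\right\rceil-\eta(n,r)\ \ge\ \left\lfloor\frac{5rn}{8}\right\rfloor ,
\]
with $m=4$ for $r=10$ and $m=5$ for $r=13$. Once this holds, Corollary~\ref{cor:aes-integer} gives $f(n,r,4)=\delta(n,r,3)$ directly.

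First compare the real parts. For $r=10$ the left side without rounding is $9n-\frac{27n}{10}=\frac{63n}{10}$, and the right side is $\frac{50n}{8}=\frac{25n}{4}$. The slack is $\frac{63}{10}-\frac{25}{4}=\frac{1}{20}$ per unit of $n$, so the gap is $n/20$. For $r=13$ the left side is $12n-\frac{48n}{13}=\frac{108n}{13}$ against $\frac{65n}{8}$, giving slack $\frac{864-845}{104}=\frac{19}{104}$ per unit, so the gap is $19n/104$. For large $n$ the inequality is therefore automatic. The losses to control are the ceiling, which costs less than $1$, the correction $\eta\le1$, and the floor on the right, which gains up to $7/8$.

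Next, handle general $n$ exactly by residues. I would write $n=qr+\rho$ as in Corollary~\ref{cor:t3-delta}. Then $d=(m-1)(n-q)-k$, where $k=\lfloor(m-1)\rho/r\rfloor$, and $\eta=1$ only when $\rho=3h+1$ with $1\le h\le m-2$ and $q+1\nmid m-h-1$. Under this parametrization the left side becomes an explicit affine function of $q$ for each fixed $\rho$. The right side depends on $5rn \bmod 8$, so it is affine in $q$ only up to a residue of $q$ modulo $8$, or modulo $4$ when $r=10$.

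The main work is then a finite check over $\rho\in[0,r)$ and the residue of $q$. The slope in $q$ is positive, since the gap grows like $n/20$ or $19n/104$. So it suffices to verify the smallest $q$ in each class, or equivalently to check every $n$ below an explicit bound $N_r$. I expect $N_{10}$ to be around $40$ and $N_{13}$ around $11$, past which the real slack alone exceeds the combined rounding loss of $2$.

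The main obstacle is the small-$n$ range for $r=10$, where the slack $n/20$ is tiny and $\eta=1$ can occur. The cases in question are $\rho\in\{4,7\}$ with $q+1\nmid 3-h-1$, for example $n=4$ and $n=7$. Here the inequality might genuinely fail, so I would first tabulate $\delta(n,10,3)$ and $\lfloor 25n/4\rfloor$ for $n<40$.

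If some $n$ fails, the fallback is to use the exceptional small-$n$ criteria of \cite[Corollaries~3.3 and~4.3]{LTZ}, through Corollary~\ref{cor:f}. In that second condition the left side is positive: for $r=10$ it equals $\frac{10}{72}-\frac{2}{9}+\frac1{10}=\frac{1}{60}$. It therefore holds for $n\ge60$, which does not help small $n$. For genuinely small $n$, I would instead argue directly that any $K_4$-free graph with minimum degree exceeding $\delta(n,r,3)$ must have minimum degree above $\frac58 rn$. This follows from the integrality and the parity of $5rn/8$, and then Andr\'asfai--Erd\H{o}s--S\'os applies.
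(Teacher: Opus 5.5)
You have the same reduction as the paper. Via Corollary~\ref{cor:aes-integer} and Corollary~\ref{cor:t3-delta}, everything comes down to $d+\eta(n,r)\le\lceil 11n/4\rceil$ for $r=10$ and $d+\eta(n,r)\le\lceil 31n/8\rceil$ for $r=13$. Your real-slack estimate correctly settles $n\ge40$ and $n\ge11$ respectively.

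\textbf{The gap.} The remaining arithmetic is the entire content of the proposition, and you leave it undone. You defer it to a table, say the inequality ``might genuinely fail'', and then fall back on a circular argument. Integrality and parity can only give $\delta(G)\ge\delta(n,r,3)+1>\frac58rn$ when $\delta(n,r,3)\ge\lfloor\frac58rn\rfloor$, which is exactly the inequality under test. At an $n$ where it failed, you would have to rule out $K_4$-free graphs with minimum degree in $(\delta(n,r,3),\frac58rn]$, which is the original problem.

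\textbf{Slips in the setup.}
For fixed $\rho$ the left side is not affine in $q$, because $\eta$ depends on whether $q+1$ divides $m-h-1$. For $r=10$ that quantity is $3-h$, not $3-h-1$.
Your examples are wrong. At $n=4$ and $n=7$ one has $q=0$, so $q+1=1$ divides everything and $\eta(4,10)=\eta(7,10)=0$.
This matters, because $n=4$ is exactly where the crude bound $\eta\le1$ breaks: $\lceil 108/10\rceil+1=12>11$. So the divisibility condition has to be used, not merely bounded.

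\textbf{How to close it.} When $\eta=0$ the inequality is immediate from monotonicity of the ceiling, since $\frac{27}{10}\le\frac{11}{4}$ and $\frac{48}{13}\le\frac{31}{8}$. Only the cases with $\eta=1$ need work.

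For $r=10$, $\eta=1$ means $\rho=4$ with $q\ge2$, or $\rho=7$ with $q\ge1$. Write $d=27q+\lceil 27\rho/10\rceil$ and $\lceil 11n/4\rceil=27q+\lceil q/2+11\rho/4\rceil$. When $\rho=4$, the term $q/2\ge1$ absorbs the correction. When $\rho=7$, the difference $\lceil 77/4\rceil-\lceil 189/10\rceil=1$ absorbs it. Equality occurs at $n=17$ and $n=24$.

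For $r=13$, $\eta=1$ forces $q\ge1$. Compare $d=48q+\lceil 48\rho/13\rceil$ with $\lceil 31n/8\rceil=50q+\lceil(3q+31\rho)/8\rceil$; this leaves a surplus of at least $2q\ge2$.

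This is the paper's argument. It covers every $n$ at once, with no threshold and no table.
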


\begin{proof}
By Corollary~\ref{cor:aes-integer}, it is enough to prove
\[
\delta(n,r,3)\ge
\left\lfloor\frac58rn\right\rfloor.
\]

First let $r=10$, so $m=4$.  Put
\[
d=\left\lceil\frac{27n}{10}\right\rceil.
\]
By Corollary~\ref{cor:t3-delta},
\[
\delta(n,10,3)=9n-d-\eta(n,10).
\]
Thus it is enough to prove
\[
d+\eta(n,10)\le\left\lceil\frac{11n}{4}\right\rceil.
\]
Write $n=10q+\rho$ with $0\le\rho\le9$.  Then
\[
d=27q+\left\lceil\frac{27\rho}{10}\right\rceil
\]
and
\[
\left\lceil\frac{11n}{4}\right\rceil
=
27q+
\left\lceil
\frac q2+\frac{11\rho}{4}
\right\rceil.
\]
Since $11\rho/4\ge27\rho/10$, the desired inequality is immediate except possibly when
$\eta(n,10)=1$.  The criterion in Corollary~\ref{cor:t3-delta} shows that this occurs
only in the following two cases:
\[
\rho=4,\ q\ge2,
\qquad\text{or}\qquad
\rho=7,\ q\ge1.
\]
For $\rho=4$, the two ceilings at $q=0$ are both $11$, while $q\ge2$ contributes at least
one additional unit through the term $q/2$.  For $\rho=7$,
\[
\left\lceil\frac{77}{4}\right\rceil
-
\left\lceil\frac{189}{10}\right\rceil
=
20-19=1.
\]
Hence the required inequality holds for all $n$.

Now let $r=13$, so $m=5$, and put
\[
d=\left\lceil\frac{48n}{13}\right\rceil.
\]
It is enough to prove
\[
d+\eta(n,13)\le\left\lceil\frac{31n}{8}\right\rceil.
\]
Write $n=13q+\rho$, with $0\le\rho\le12$.  Then
\[
d=48q+\left\lceil\frac{48\rho}{13}\right\rceil
\]
and
\[
\left\lceil\frac{31n}{8}\right\rceil
=
50q+
\left\lceil
\frac{3q+31\rho}{8}
\right\rceil.
\]
If $q=0$, then $\eta(n,13)=0$, and the inequality follows from
\[
\frac{31}{8}>\frac{48}{13}.
\]
If $q\ge1$, then $\eta(n,13)\le1$ and
\[
\left\lceil\frac{3q+31\rho}{8}\right\rceil
\ge
\left\lceil\frac{31\rho}{8}\right\rceil
\ge
\left\lceil\frac{48\rho}{13}\right\rceil.
\]
The extra term $2q$ therefore absorbs the possible one-unit correction.  This proves the
claim.
\end{proof}

\begin{corollary}[$K_4$-free multipartite threshold]\label{cor:all-k4}
For every $n\ge1$ and every $r\ge4$, the value $f(n,r,4)$ is given by the following formula.

\[
f(n,r,4)=
\begin{cases}
\displaystyle
3n-\left\lceil\frac{2n}{3}\right\rceil,
& r=4,\\[3mm]
\displaystyle
\frac{2r}{3}\,n,
& r\ge6\text{ and }r\equiv0\pmod3,\\[3mm]
\displaystyle
\left(r-\left\lceil\frac r3\right\rceil\right)n,
& r\equiv2\pmod3,\\[3mm]
\displaystyle
(r-1)n-
\left\lceil
\frac{(m-1)(r-1)n}{r}
\right\rceil
-\eta(n,r),
& r=3m-2\equiv1\pmod3,\ r\ge7.
\end{cases}
\]
Here, in the last line, $m\ge3$ and $\eta(n,r)$ is defined above.
\end{corollary}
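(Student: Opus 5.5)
The corollary follows by assembling known results residue class by residue class; the only new input is the last line. I would split according to $r \bmod 3$ and treat each class separately.

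For $r=4$ I would quote the classical result of Lo--Treglown--Zhao~\cite{LTZ}, which gives $f(n,4,4)=3n-\lceil 2n/3\rceil$. For $r\equiv0\pmod3$ with $r\ge6$, and for $r\equiv2\pmod3$, I would likewise cite the congruence-class formulas of~\cite{LTZ}. In these classes the multipartite Tur\'an-type construction (a balanced blow-up, splitting parts evenly or nearly evenly among three colour classes) matches the upper bound exactly. No integer correction arises, so these lines are restatements and need only a check that the displayed expressions agree with~\cite{LTZ}.

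The substantive case is $r=3m-2$ with $m\ge3$, that is $r=7,10,13,16,\dots$. Here the plan is to first establish $f(n,r,4)=\delta(n,r,3)$ and then replace $\delta(n,r,3)$ by the explicit formula of Corollary~\ref{cor:t3-delta}. For the equality:
\begin{itemize}
\item For $r=7$ I would invoke~\cite{Kasugai}, checking that its value coincides with Corollary~\ref{cor:t3-delta} at $m=3$. The Remark says this instance recovers the earlier extremum.
\item For $r=10$ and $r=13$, Proposition~\ref{prop:r10r13} gives the equality for all $n$.
\item For $r\ge16$, I would use the criterion $r\ge a(3t-1)=2\cdot 8=16$ of Corollary~\ref{cor:f}, which is the equality $f(n,r,4)=\delta(n,r,3)$ from~\cite[Corollary~3.3]{LTZ}, valid for every $n$.
\end{itemize}
Substituting Corollary~\ref{cor:t3-delta} in all these cases then yields the last line of the formula.

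The main obstacle I expect is consistency at the seam $r=7$. The formula in~\cite{Kasugai} may be written in a different normal form from $\eta(n,7)$, for instance by residues of $n$ mod $7$ with an explicit exceptional set. I would verify the match directly. For $m=3$ the only possible $h$ is $h=1$, so $\eta(n,7)=1$ exactly when $\rho=4$ and $(q+1)\nmid 1$, that is $n\equiv4\pmod7$ with $n\ge11$. I would then compare this with the exceptional cases listed in~\cite{Kasugai}. A secondary point is to confirm that the hypotheses of~\cite{LTZ} for the classes $r\equiv0,2\pmod3$ really hold for all $n\ge1$ with no lower bound on $n$; if any such restriction exists, Corollary~\ref{cor:aes-integer} with the explicit $\delta$ would be the tool to remove it.
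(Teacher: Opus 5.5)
Your proposal is correct and follows the paper's proof essentially step for step. You cite \cite{LTZ} for $r=4$ and for $r\equiv0,2\pmod 3$. For $r=3m-2$ you obtain $f(n,r,4)=\delta(n,r,3)$ from \cite{Kasugai} at $r=7$, from Proposition~\ref{prop:r10r13} at $r=10,13$, and from \cite[Corollary~3.3]{LTZ} via $r\ge 2(3\cdot 3-1)=16$, and you then substitute Corollary~\ref{cor:t3-delta}. One small caveat concerns your fallback remark: Corollary~\ref{cor:aes-integer} could not remove a size restriction in the classes $r\equiv0,2\pmod 3$, because for $t=3$ Theorem~\ref{thm:main} covers only $a=2$, i.e.\ $r\equiv1\pmod 3$. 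No such fallback is needed, however, since the cited results in those classes hold for every $n\ge1$.
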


\begin{proof}
The case $r=4$ is the already solved $r=t+1$ case.  Equation~(1.1) of
Lo--Treglown--Zhao~\cite{LTZ} gives
\[
f(n,4,4)=3n-\left\lceil\frac{2n}{3}\right\rceil.
\]
For $r\ge5$, the classes $r\equiv0\pmod3$ and $r\equiv2\pmod3$ are the previously known
cases: the former follows from Tur\'an's theorem together with the corresponding extremal construction, as summarized in~\cite{LTZ}, while the latter is
Theorem~1.2 of Lo--Treglown--Zhao~\cite{LTZ}.

It remains to consider $r\equiv1\pmod3$, so write $r=3m-2$.  The case $m=3$, namely
$r=7$, was established in~\cite{Kasugai}.  The cases $m=4$ and $m=5$,
namely $r=10$ and $r=13$, follow from Proposition~\ref{prop:r10r13}.  Finally, for
$m\ge6$ one has $r\ge16=2(3\cdot3-1)$, so Corollary~3.3 of Lo--Treglown--Zhao~\cite{LTZ} gives
\[
f(n,r,4)=\delta(n,r,3)
\]
for every $n$.  Apply Corollary~\ref{cor:t3-delta}.
\end{proof}

\begin{remark}
Lo--Treglown--Zhao~\cite{LTZ} obtained the $r=10$ and $r=13$ cases only under the restrictions
$n\ge60$ and $n\ge22$, respectively, and in the congruence class
$r\equiv1\pmod3$ their stated formula retained an additive constant.  The evaluation
of $\delta(n,r,3)$ removes both restrictions and determines that integer correction.
Together with the classical $r=4$ result and the $r=7$ case~\cite{Kasugai}, no admissible case remains in the balanced
multipartite $K_4$-free minimum-degree problem.
\end{remark}

\section{Concluding remarks}

Theorem~\ref{thm:main} removes the integer-rounding gap between the general upper bound for
$\delta(n,r,t)$ and the known constructions throughout
\[
r=mt-a,\qquad
2\le a\le\min\{m,t-1\}.
\]
The minimum matrix deficit exceeds the rounded lower bound $d$ by at most one, and the boundary
case is governed by a divisibility obstruction. The structural reason is that equality in the
column-sum inequalities forces every mixed row to use exactly two colours and forces the
resulting support graph to be a spanning star.

For $t=3$, the chromatic formula has an additional consequence: after combining the
$r=10$ and $r=13$ cases above with the $r=7$ result in~\cite{Kasugai} and the classical
congruence classes, the balanced multipartite $K_4$-free minimum-degree problem is determined
for every admissible $r\ge4$ and every $n\ge1$.

A natural next problem is to determine whether analogous formulae hold in the remaining
parameter range $m<a<t$, where the known constructions have a different form, and to determine
the full set of pairs $(r,t)$ for which $f(n,r,t+1)=\delta(n,r,t)$.

\section*{Declaration of Generative AI Use}

ChatGPT (OpenAI) was used to assist with reviewing the proof and translating the manuscript from Japanese into English. All mathematical arguments incorporated into the final manuscript were independently verified by the author, who assumes full responsibility for the content.

\end{document}